\theoremstyle{plain}%
 \newtheorem{theorem}{Theorem}
 \newtheorem{lemma}{Lemma}%
\theoremstyle{remark}
\theoremstyle{definition}
\begin{document}

\begin{center}
{\large On a Ramanujan-type series associated with the Heegner number 163}
\end{center}

\begin{center}
{\textsc{John M. Campbell} } 

 \ 

\end{center}

\begin{abstract}
 Using the Wolfram {\tt NumberTheory} package and the {\tt Recognize} command, together with numerical estimates involving the elliptic lambda and elliptic 
 alpha functions, Bagis and Glasser, in 2013, introduced a conjectural Ramanujan-type series related to the class number $h(-d) = 1$ for a 
 quadratic form with discriminant $d = 163$. This conjectured series is of level one and has positive terms, and recalls the Chudnovsky brothers' alternating 
 series of the same level, given the connection between the Chudnovsky--Chudnovsky formula and the Heegner number $d = 163$ such that 
 $\mathbb{Q}\left( \sqrt{-d} \right)$ has class number one. We prove Bagis and Glasser's conjecture by proving evaluations for $\lambda^{\ast}(163)$ and 
 $\alpha(163)$, which we derive using the Chudnovsky brothers' formula together with the analytic continuation of a formula due to the Borwein brothers for 
 Ramanujan-type series of level one. As a byproduct of our method, we obtain an infinite family of Ramanujan-type series for $\frac{1}{\pi}$ generalizing the 
 Chudnovsky algorithm. 
\end{abstract}

{\footnotesize \emph{MSC:} 11R29, 11Y60}

{\footnotesize \emph{Keywords:} Chudnovsky algorithm, Ramanujan-type series, class number, Heegner number, elliptic}

{\footnotesize {lambda function}}

\section{Introduction}
 Bagis and Glasser, in 2013 \cite{BagisGlasser2013}, obtained the following infinite family of Ramanujan-type series: 
\begin{equation}\label{BGfamily}
 \frac{3}{ \pi \sqrt{r} \sqrt{1 - J_{r}} } 
 = \sum_{n=0}^{\infty} \frac{ \left( \frac{1}{6} \right)_{n} 
 \left( \frac{5}{6} \right)_{n} \left( \frac{1}{2} \right)_{n} }{ (n!)^{3} } 
 (J_{r})^{n} (6n + 1 - T_{r}), 
\end{equation}
 where 
\begin{equation}\label{Jrdefinition}
 J_{r} := 1728 j_{r}^{-1} = 4 \beta_{r} (1 - \beta_{r}) 
\end{equation}
 and 
\begin{equation}\label{Trdefinition}
 T_{r} := \frac{ 2 j_{r}^{1/3} \sigma(r) G_{r}^{8} }{ \sqrt{r} \sqrt{j_{r} - 1728}}, 
\end{equation}
 referring to \cite{BagisGlasser2013} for details. 
 Following Bagis and Glasser's work in \cite{BagisGlasser2013}, 
 the \emph{class number} $h(-d)$ of a quadratic form of discriminant $d$ is such that
\begin{equation}\label{eqJacobi}
 h(-d) = -\frac{w(d)}{2d} \sum_{n=1}^{d-1} \left( \frac{-d}{n} \right) n, 
\end{equation}
 where $\left( \frac{n}{m} \right) $ denotes the Jacobi symbol and $w(3) = 6$, $w(4) = 4$, and $w(d) = 2$ otherwise. Since \eqref{eqJacobi} reduces to 
 $1$ for $d = 163$, it was suggested by Bagis and Glasser that this could be indicative of a greater likelihood of determining relatively simple closed forms for 
 the coefficients involved in the Ramanujan-type series in \eqref{BGfamily}. Bagis and Glasser \cite{BagisGlasser2013} applied the Wolfram Mathematica 
 {\tt NumberTheory} package and the {\tt Recognize} command to determine conjectured 
 closed forms for $J_{163}$ and $T_{163}$, based on numerical estimates of $J_{163}$ and $T_{163}$. This led Bagis and Glasser 
 \cite{BagisGlasser2013} to provide a conjectured Ramanujan-type series, given by setting the conjectured values for $J_{163}$ and $T_{163}$ into 
 \eqref{BGfamily}. This purported Ramanujan-type series would provide a natural companion to the famous Chudnovsky--Chudnovsky 
 formula \cite{ChudnovskyChudnovsky1988} 
\begin{equation}\label{Chudnovskymain}
 \frac{711822400}{\sqrt{10005}\, \pi } = 
 \sum_{n=0}^{\infty} \frac{ \left( \frac{1}{6} \right)_{n} \left( \frac{1}{2} \right)_{n} \left( \frac{5}{6} \right)_{n} }{ \left( 1 \right)_n^{3} }
 \left( -\frac{1}{53360} \right)^{3n}
 \left( \frac{13591409}{6} + 90856689 n \right). 
\end{equation}
 The Chudnovsky brothers' formula in \eqref{Chudnovskymain}
 is closely related to the same Heegner number $ d = 163$. 
 Since this Heegner number gives the value of $1$ in \eqref{eqJacobi}, 
 the quadratic field $\mathbb{Q}\left( \sqrt{-163} \right)$ has class number $1$. 

 The connection between the $r = 163$ case of \eqref{BGfamily} and the Chudnovsky brothers' formula in \eqref{Chudnovskymain} has not 
 previously been investigated, and the conjectured values for $J_{163}$ and $T_{163}$ have not previously been proved. The connection between 
 \eqref{Chudnovskymain} and the Ramanujan-type series obtained by setting $r = 163$ in \eqref{BGfamily} is of a nontrivial nature. This is evidenced by 
 the very unwieldy algebraic coefficients involved in Bagis and Glasser's conjecture, in contrast to the rational series in \eqref{Chudnovskymain}. 

 In this article, we succeed in proving Bagis and Glasser's conjectured Ramanujan-type series, by proving closed forms for $\lambda^{\ast}(163)$ and $ 
 \alpha(163)$, referring to Section \ref{sectionBG} for details. As a byproduct of the technique we introduce to prove closed forms for the coefficients 
 involved in the $r = 163$ case of \eqref{BGfamily}, we obtain an infinite family of generalizations of the famous 
 Chudnovsky--Chudnovsky formula in \eqref{Chudnovskymain}. 
 This infinite family is given explicitly in terms of the elliptic lambda and elliptic alpha functions. 

  The value of Ramanujan's class invariant $G_{163} = 2^{-1/4} x$, where $x^{3} - 6x^2 + 4x - 2 =0$, was given in 
  Part V of Ramanujan's    Notebooks \cite[p.\ 194]{Berndt1998}. 
 However, this cannot be applied in any direct way to prove Bagis and Glasser's conjecture. 
 This becomes apparent if we consider 
 the family of Ramanujan-type series of level $1$ 
 given by the Borwein brothers \cite[p.\ 365]{BorweinBorwein1988} \cite[p.\ 183]{BorweinBorwein1987} 
 and reproduced below (see also \cite{Milla2018}): 
\begin{equation}\label{publishedell1}
 \frac{1}{\pi} = \sum_{n=0}^{\infty} 
 \frac{ \left( \frac{1}{6} \right)_{n} \left( \frac{1}{2} \right)_{n} \left( \frac{5}{6} \right)_{n} 
 }{ \left( n! \right)^{3} } f_{n}(N) \left( J_{N}^{-1/2} \right)^{2n+1}, 
\end{equation}
 where 
\begin{align}
\begin{split}
 f_{n}(N) := & 
 \frac{1}{3\sqrt{3}} [ \sqrt{N} \sqrt{1 - G_{N}^{-24}} + 2(\alpha(N) - \sqrt{N} k_{N}^{2}) 
 (4 G_{N}^{24} - 1)] \\
 & + n \sqrt{N} \frac{2}{3 \sqrt{3}} [(8 G_{N}^{24} + 1)\sqrt{1 - G_{N}^{-24}}]. 
\end{split}\label{20230870747171787P7M71A}
\end{align}
 The values for $\lambda^{\ast}(163)$ and $\alpha(163)$ have not been given or used
 in the Chudnovsky brothers' derivation of \eqref{Chudnovskymain} \cite{ChudnovskyChudnovsky1988}
 or in subsequent work. 
       Bagis and Glasser's techniques    \cite{BagisGlasser2013} do not    provide any way of  
      proving any evaluation for  the required  value  $\alpha(163)$.   

\section{Bagis and Glasser's conjecture}\label{sectionBG}
 The formula in \eqref{BGfamily} given by Bagis and Glasser \cite{BagisGlasser2013} 
 can be shown to be equivalent to Equation (5.5.18) from the seminal \emph{Pi and the AGM} text 
 \cite{BorweinBorwein1987}. This equivalent formula is given in \eqref{publishedell1}. 
 We apply a closely related formula to prove Bagis and Glasser's conjecture concerning the Heegner number 163. 

 The \emph{elliptic lambda function} $\lambda^{\ast}(r)$ \cite[p.~67, eq.~(3.2.2)]{BorweinBorwein1987} 
 is such that $ \lambda^{\ast}(r):= k_r$ 
 for $0 < k_{r} < 1$, where $ \frac{ \mathbf{K}' }{\mathbf{K}}(k_{r}) = \sqrt{r}$. 
 The \emph{elliptic alpha function} \cite[p.~152, eq.~(5.1.2)]{BorweinBorwein1987} is such that 
\begin{equation}\label{mainalpha}
	\alpha(r) = \frac{\pi}{4 \mathbf{K}^{2}} - \sqrt{r} \left( \frac{\mathbf{E}}{\mathbf{K}} - 1 \right). 
\end{equation}
 The complete elliptic integrals of the first and second kinds are, respectively, such that 
\begin{equation}\label{KEdefinition}
 	\mathbf{K}(k) := \int_{0}^{\pi/2} \frac{d\theta}{\sqrt{1 - k^2 \sin^2 \theta}} \ \ \ 
 \text{and} \ \ \ 
 \mathbf{E}(k) := \int_{0}^{\pi/2} \sqrt{1 - k^2 \sin^2 \theta} \, d\theta. 
\end{equation}
 The argument $k$ in \eqref{KEdefinition} is referred to as the \emph{modulus}. 

 A \emph{Ramanujan-type series} is of the form
\begin{equation}\label{maindefinition}
	\frac{1}{\pi} = \sum_{n = 0}^{\infty} \frac{\left( \frac{1}{2} \right)_{n} \left( \frac{1}{s} \right)_{n} 
		\left(1 - \frac{1}{s} \right)_{n} }{ \left( 1 \right)_{n}^{3} } z^n (a + b n), 
\end{equation}
 where $s \in \{ 2, 3, 4, 6 \}$ and where $z$, $a$, and $b$ are real and algebraic. 
 The level of \eqref{maindefinition} is $ \ell = 4\sin^2\frac{\pi}{s}$ 
 and refers to the level of the modular form parametrizing \eqref{maindefinition}. 
 Equation (5.5.18) from \emph{Pi and the AGM} \cite{BorweinBorwein1987} 
 may be formulated as follows. 
 If we set 
\begin{equation}\label{xanalytic}
 x = 4 \left( \left(\lambda^{\ast}(r)\right)^2 - \left(\lambda^{\ast}(r)\right)^4\right), 
\end{equation}
 then the following Ramanujan-type series evaluation holds true: 
 \begin{align}\label{eq:s-6-general}
 \frac{1}{\pi} = \sum_{n = 0}^{\infty} \frac{ \left( \frac{1}{6} \right)_{n} \left( \frac{1}{2} \right)_{n} \left( \frac{5}{6} \right)_{n} }{ \left( 1 \right)_{n}^3} z^{n} (a+bn),
 \end{align}
 where 
 \begin{align}
 z &= \frac{27 x^2}{(4-x)^3}, \label{zpos} \\[1.5ex]
 a &= \frac{2 (4-x) \alpha (r)+ \left(x-4+4 \sqrt{1-x}\right) \sqrt{r}}{\sqrt{(4-x)^{3}}}, \label{20230777273157777AM2A} \\[1.5ex]
 b &= \frac{2 (x+8) \sqrt{1-x} \sqrt{r}}{\sqrt{(4-x)^{3}}}, \label{finalpositive} 
 \end{align} 
 provided that $|z|<1$. As noted by the Borwein brothers \cite[p.\ 368]{BorweinBorwein1988}, Ramanujan gave two series for $\frac{1}{\pi}$ of level $1$ 
 with a positive convergence rate, and these two series are given by special cases of \eqref{eq:s-6-general}, subject to the relations in 
 \eqref{zpos}--\eqref{finalpositive}. The formulation of \eqref{BGfamily} by the Borwein brothers in both \cite{BorweinBorwein1988} and 
 \cite{BorweinBorwein1987} only admits positive convergence rates, but it was stated in \cite[p.\ 368]{BorweinBorwein1988} that this formulation may be 
 extended, via analytic continuation, to obtain a corresponding family of Ramanujan-type series for $\ell = 1$ with negative convergence rates. An 
 explicit evaluation for the coefficients and the convergence rates for this family is given as follows, and has not appeared in 
 \cite{BorweinBorwein1988,BorweinBorwein1987} or in subsequent literature. Let $r>1$ and let $x$ be as in \eqref{xanalytic}. Then 
 the formula in \eqref{eq:s-6-general} holds for $z = -\frac{27 x}{(1-4 x)^3}$ and $b = \frac{(1+8x) \sqrt{1-x} \sqrt{r}}{\sqrt{(1-4x)^{3}}}$ and 
\begin{equation}\label{anegconv}
 a = \frac{2 (1-4x) \alpha (r)+ \left(4x-1+ \sqrt{1-x}\right) \sqrt{r}}{2\sqrt{(1-4x)^{3}}} 
\end{equation}
 provided that $|z|<1$. 

By inputting expressions as in 
\begin{verbatim}
Recognize[N[J163,1500],16,x]
\end{verbatim}
 and 
\begin{verbatim}
Recognize[N[T163,1500],16,x]
\end{verbatim}
 into Mathematica, for numerical estimates of $J_{163}$ and $T_{163}$ derived from \eqref{Jrdefinition} and \eqref{Trdefinition}, 
 this led Bagis and Glasser 
 \cite{BagisGlasser2013} to conjecture that the following evaluations hold for $J_{163}$ and $T_{163}$: $$ J_{163} = 
 \frac{4 \left( C_{1} - C_{2} \frac{1}{\sqrt[3]{-A_{1} + \sqrt{489} \text{B1}}}+30591288 \sqrt[3]{-A_{1} + \sqrt{489} 
 \text{B1}}\right)}{10792555251621895860488211571345343375}, $$ where 
\begin{align*}
 & A_{1} = 12737965652562547164590026038483234248161827096523072256574968383, \\ 
 & B_{1} = 229038073182066825378006485964950394558349727761749294205546402325349, \\
 & C_{1} = 8808429913332498766352891, \\
 & C_{2} = 902206261147132595923169636910570558029813352485594880, 
\end{align*} 
 and $$ T_{163} = \frac{5 \left(12948195754365757115+8 \sqrt[3]{A_{2} - B_{2} \sqrt{489}}+8 \sqrt[3]{A_{2} + B_{2} 
 \sqrt{489}}\right)} {83470787671093501833}, $$ where 
\begin{align*}
 & A_{2} = 3802386862487392962897493239274992371253057854289262, \\
 & B_{2} = 3865464212119923579732688315287754932290919450. 
\end{align*}
 Numerical evidence suggests that 
\begin{equation}\label{numericalsuggests}
 \frac{1}{\pi} = \sum_{n=0}^{\infty} 
 \frac{ \left( \frac{1}{6} \right)_{n} \left( \frac{1}{2} \right)_{n} 
 \left( \frac{5}{6} \right)_{n} }{ \left( n! \right)^{3} } \mathcal{J}^{n} 
 \left( \frac{\sqrt{1 - \mathcal{J}} \sqrt{163} (1 - \mathcal{T})}{3} + 2 \sqrt{1 - \mathcal{J}} \sqrt{163} n \right), 
\end{equation}
 where $\mathcal{J}$ and $\mathcal{T}$ respectively denote
 Bagis and Glasser's conjectured closed forms for $J_{163}$ and $T_{163}$ \cite{BagisGlasser2013}. 
 The purpose of the current section is to prove Bagis and Glasser's conjectured formula in 
 \eqref{numericalsuggests}. As suggested by Bagis and Glasser \cite{BagisGlasser2013}, 
 a main source of interest in this Ramanujan-type series is due to how it provides about 
 32 digits per term. 

\begin{lemma}\label{lambda163}
 The elliptic lambda function is such that $$ \lambda^{\ast}(163) = \frac{\sqrt{\frac{1}{2} \left(-1-\frac{4270934400}{\sqrt[3]{1+557403 
 \sqrt{489}}}+80040 \sqrt[3]{1+557403 \sqrt{489}}\right)}}{2 \sqrt{-1-\sqrt{1+\frac{1}{4} \left(-1-\frac{4270934400}{\sqrt[3]{1+557403 
 \sqrt{489}}}+80040 \sqrt[3]{1+557403 \sqrt{489}}\right)}}}. $$ Writing $x = 4( \left( \lambda^{\ast}(163)\right)^{2} - \left( \lambda^{\ast}(163) 
 \right)^{4} )$, the elliptic alpha function is such that $$ \alpha(163) = \frac{13591409 \sqrt{1-4 x}}{426880 \sqrt{10005}}+\frac{\sqrt{163} \sqrt{1 
 -x}}{8 x-2}+\frac{\sqrt{163}}{2}. $$ 
\end{lemma}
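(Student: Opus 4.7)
The plan is to specialize at $r=163$ the analytic continuation of the Borwein level-one family stated just before the lemma (with $z=-27x/(1-4x)^{3}$, $a$ as in \eqref{anegconv}, and $b=(1+8x)\sqrt{1-x}\sqrt{r}/\sqrt{(1-4x)^{3}}$) and to identify the resulting $1/\pi$ series with the Chudnovsky--Chudnovsky formula \eqref{Chudnovskymain}. Both sides have the same hypergeometric factor and both correspond to the Heegner datum $r=163$, so matching the convergence rate $z$ forces an algebraic equation on $x=4\bigl((\lambda^{\ast}(163))^{2}-(\lambda^{\ast}(163))^{4}\bigr)$, while matching the constant coefficient $a$ then determines $\alpha(163)$ by a linear solve.

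First, equating the two values of $z$ gives $(1-4x)^{3}=27\cdot 53360^{3}\cdot x$, and the substitution $x=y+1/4$ produces the depressed cubic $y^{3}+(C/64)\,y+C/256=0$ with $C=27\cdot 53360^{3}$. Cardano's formula combined with the key algebraic identity
\[
1+53360^{3}=557403^{2}\cdot 489,
\]
which I would verify via $(1+53360)(53360^{2}-53360+1)=231^{2}\cdot (489\cdot 2413^{2})$, collapses the Cardano radical to $557403\sqrt{489}$ and brings $\sqrt{489}=\sqrt{3\cdot 163}$ naturally into the expression. Setting $\omega=\sqrt[3]{1+557403\sqrt{489}}$ and using $(557403\sqrt{489}-1)(557403\sqrt{489}+1)=53360^{3}$ to denest the conjugate cube root, the real root of the depressed cubic becomes $y=1067733600/\omega-20010\,\omega$, equivalently $1-4x=80040\,\omega-4270934400/\omega$. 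Inverting $x=4\lambda^{2}(1-\lambda^{2})$ on the branch $0<\lambda^{\ast}(163)<1/\sqrt{2}$ (forced since $\lambda^{\ast}(r)\to 0$ as $r\to\infty$) yields $\lambda^{\ast}(163)=\sqrt{2x}/\bigl(2\sqrt{1+\sqrt{1-x}}\bigr)$, and rewriting both radicands in terms of $\omega$ reproduces the nested radical displayed in the lemma. The apparent negativity of the inner radicands is cosmetic, arising from the multiplication of numerator and denominator by a common imaginary factor.

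For the second assertion, I equate $a=13591409\sqrt{10005}/(6\cdot 711822400)$ read off from \eqref{Chudnovskymain} with $a$ from \eqref{anegconv} at $r=163$, and solve the resulting linear equation for $\alpha(163)$. Using the identities $6\cdot 711822400=426880\cdot 10005$, $\sqrt{(1-4x)^{3}}/(1-4x)=\sqrt{1-4x}$, and $8x-2=-2(1-4x)$, the solution packages into the displayed closed form. The principal obstacle is the denesting identity $1+53360^{3}=557403^{2}\cdot 489$: without it, Cardano's solution yields a stubbornly nested radical that cannot be reduced to $\sqrt[3]{1+557403\sqrt{489}}$, and $\sqrt{163}$ does not emerge naturally from the formula. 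This is exactly where the class-number-one arithmetic of $\mathbb{Q}(\sqrt{-163})$ enters in an essential way. Verifying that the depressed cubic has discriminant $-C^{2}(C+27)/65536<0$ (so there is a unique real root) and the branch choice in inverting the biquadratic are secondary points demanding care.
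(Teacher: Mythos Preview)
Your proposal is correct and follows essentially the same route as the paper: identify the Chudnovsky formula with the analytic continuation of the Borwein level-one family, match the convergence rate $z$ to pin down $x$ (hence $\lambda^{\ast}(163)$ by inverting the biquadratic), and then match the constant coefficient $a$ to solve linearly for $\alpha(163)$. The only substantive differences are that the paper \emph{derives} $r=163$ by also matching the $b$-coefficient rather than asserting it, while you in turn supply the explicit Cardano and denesting computations (in particular the key identity $1+53360^{3}=557403^{2}\cdot 489$) that the paper leaves implicit.
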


\begin{proof}
 We normalize the Chudnovsky--Chudnovsky formula shown in \eqref{Chudnovskymain} and proved in \cite{ChudnovskyChudnovsky1988} so that 
\begin{align}
\begin{split}
 \frac{1}{\pi} = & \sum_{n=0}^{\infty} 
 \frac{ \left( \frac{1}{6} \right)_{n} \left( \frac{1}{2} \right)_{n} 
 \left( \frac{5}{6} \right)_{n} }{ \left( n! \right)^{3} } 
 \left( -\frac{1}{53360} \right)^{3n} \times \\
 & \ \ \ \ \left( \frac{13591409}{426880 \sqrt{10005}} + 
 \frac{272570067}{213440 \sqrt{10005}} n \right). 
\end{split} \label{normalizeChudnovsky}
\end{align}
 By the specialization of Bailey's transformation \cite[p.\ 181, eq.\ (5.5.9)]{BorweinBorwein1987} 
\begin{equation}\label{Baileycubicanother}
 \sum_{n = 0}^{\infty} \frac{ \left( \frac{1}{2} \right)_{n}^{3} }{ \left( 1 \right)_{n}^{3} } 
 x^{n} = \frac{1}{\sqrt{1 - 4 x}} \sum_{n = 0}^{\infty} \frac{ \left( \frac{1}{6} \right)_{n} 
 \left( \frac{1}{2} \right)_{n} \left( \frac{5}{6} \right)_{n} }{ \left( n! \right)^{3} 
 } \left( -\frac{27 x}{(1-4x)^3} \right)^{n}, 
\end{equation}
 we may obtain an equivalent version of the 
 normalized Chudnovsky formula in the following way. 
 By applying the generating 
 function evaluation (cf.\ \cite[p.~180, Theorem 5.7(a), eq.~(i)]{BorweinBorwein1987}) 
\begin{equation}\label{gfcubedcentral}
 	\sum_{n=0}^{\infty} \frac{ \left( \frac{1}{2} \right)_{n}^{3} }{ \left( 1 \right)_{n}^{3} } x^{n} 
 	= \frac{4 \mathbf{K}^2 \left( \sqrt{\frac{1 - \sqrt{1-x}}{2}} \right)}{\pi ^2} 
\end{equation}
   and the term-by-term derivatives of the series in \eqref{gfcubedcentral}     to rewrite \eqref{Baileycubicanother} and the term-by-term derivatives of   
  \eqref{Baileycubicanother},     this gives us that the normalized Chudnovsky series  
  is equal to the right-hand side of \eqref{eq:s-6-general} 
 for the $a$-value in \eqref{anegconv}
 and for   $z = -\frac{27 x}{(1-4 x)^3}$ and $b = \frac{(1+8x) \sqrt{1-x} \sqrt{r}}{\sqrt{(1-4x)^{3}}}$, 
 and for  the unique value of $r$ such that 
\begin{equation}\label{existsfixed}
 \left( -\frac{1}{53360} \right)^{3} = -\frac{108 \left( \left( \lambda^{\ast}(r)\right)^2 - 
 \left( \lambda^{\ast}(r)\right)^4\right)}{\left(1 - 
 16 \left( \left( \lambda^{\ast}(r) \right)^2 - 
 \left( \lambda^{\ast}(r) \right)^4\right)\right)^3}, 
\end{equation}
 referring to \cite{Milla2018} for details. We then take the unique real value $x$ such that $-\frac{1}{53360^3} = -\frac{27x}{(1-4x)^3}$. By then 
 rewriting the coefficient of $n$ in the polynomial factor in \eqref{normalizeChudnovsky} as 
 $ \frac{(1+8x) \sqrt{1-x} \sqrt{r}}{\sqrt{(1-4x)^{3}}}$, 
 this gives us the desired value $r = 163$. 
 So, we obtain the desired closed form for $\lambda^{\ast}(r)$ from \eqref{existsfixed}. By then rewriting the constant term in the polynomial 
 factor in \eqref{normalizeChudnovsky} as in \eqref{anegconv}, we obtain the desired closed form for $\alpha(163)$. 
\end{proof}

\begin{theorem}\label{maintheorem}
 Bagis and Glasser's Ramanujan-type series evaluation in \eqref{numericalsuggests} holds true, for the conjectured values of $J_{163}$ and $T_{163}$. 
\end{theorem}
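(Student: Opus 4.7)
My plan is to deduce Theorem \ref{maintheorem} from Lemma \ref{lambda163} by reducing it to two explicit algebraic identities, namely $J_{163} = \mathcal{J}$ and $T_{163} = \mathcal{T}$. First I would rewrite \eqref{BGfamily} at $r = 163$ by multiplying both sides by $\sqrt{163}\,\sqrt{1 - J_{163}}/3$ and distributing the factor $(6n + 1 - T_{163})$, so that the identity takes precisely the shape of \eqref{numericalsuggests} with $J_{163}$ and $T_{163}$ playing the roles of $\mathcal{J}$ and $\mathcal{T}$. The theorem is then equivalent to the two closed-form identities above.

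Since, as observed in Section \ref{sectionBG}, the Bagis--Glasser family is equivalent to the positive-convergence-rate version of \eqref{eq:s-6-general}, term-wise matching against \eqref{zpos}--\eqref{finalpositive} yields
\[
J_{163} = \frac{27 x^2}{(4-x)^3}, \qquad T_{163} = 1 - \frac{3 a}{\sqrt{163}\,\sqrt{1 - J_{163}}},
\]
where $x = 4\bigl((\lambda^{\ast}(163))^{2} - (\lambda^{\ast}(163))^{4}\bigr)$ and $a$ denotes the constant term from \eqref{20230777273157777AM2A}. Using the factorization $(4-x)^{3} - 27 x^{2} = (x+8)^{2}(1-x)$ to simplify $\sqrt{1 - J_{163}}$, the expression for $T_{163}$ collapses to a rational function of $x$, $\sqrt{1-x}$, and $\alpha(163)/\sqrt{163}$. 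Lemma \ref{lambda163} supplies closed forms for $\lambda^{\ast}(163)$ and $\alpha(163)$, whence each of $J_{163}$ and $T_{163}$ acquires an explicit closed form, and I would then compare these directly with $\mathcal{J}$ and $\mathcal{T}$.

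The main obstacle is the symbolic verification itself: every quantity involved lies in a tower over $\mathbb{Q}(\sqrt{489},\sqrt[3]{1 + 557403\sqrt{489}})$ with an additional outer square root, and direct manipulation of the resulting nested radicals is prohibitive. The cleanest route is to compute the minimal polynomial of each side of each identity over $\mathbb{Q}(\sqrt{489})$, to check that these minimal polynomials coincide, and to verify that both sides are real and agree numerically to precision exceeding the minimum gap between real roots, thereby forcing equality of the specific conjugates. This converts Bagis and Glasser's heuristic \texttt{Recognize}-based numerical recognition into a rigorous identification, now possible precisely because $\lambda^{\ast}(163)$ and $\alpha(163)$ have been pinned down in closed form by Lemma \ref{lambda163}.
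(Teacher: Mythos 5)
Your proposal is correct in substance and its computational core is the same as the paper's: everything reduces to substituting the closed forms of Lemma \ref{lambda163} into \eqref{xanalytic} and the level-one parametrization \eqref{zpos}--\eqref{finalpositive}, and then verifying by exact arithmetic that the resulting $z$, $a$, $b$ are precisely the quantities in \eqref{numericalsuggests}. The paper compresses that verification into ``follows in a direct way,'' whereas you make it rigorous (minimal polynomials over $\mathbb{Q}(\sqrt{489})$ plus numerical separation of conjugates), and your factorization $(4-x)^3-27x^2=(1-x)(x+8)^2$ is the right identity for $\sqrt{1-J_{163}}$. The structural difference is that you route through \eqref{BGfamily}, reducing the theorem to $J_{163}=\mathcal{J}$ and $T_{163}=\mathcal{T}$ and extracting $J_{163}$, $T_{163}$ by ``term-wise matching.'' Be aware that such matching is not justified merely because both series equal $\frac{1}{\pi}$ (for fixed $z$ a whole line of pairs $(a,b)$ works); it rests on Section \ref{sectionBG}'s assertion that \eqref{BGfamily} is coefficient-wise the same parametrized family as Equation (5.5.18) of \emph{Pi and the AGM}, which the paper states but does not prove in detail. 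The paper's proof avoids this dependence (and does not need \eqref{BGfamily} at all) by deriving \eqref{numericalsuggests} directly from \eqref{eq:s-6-general}; your detour, in exchange, yields the stronger conclusion that the conjectured closed forms for the modular quantities $J_{163}$ and $T_{163}$ themselves are correct. Either cite that equivalence explicitly as an ingredient or switch to the direct substitution, which ends in exactly the same radical verification.
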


\begin{proof}
 This follows in a direct way by setting the values for $\lambda^{\ast}(163)$ and $\alpha(163)$ given  in Lemma \ref{lambda163} into \eqref{xanalytic}, for 
 the values of $z$, $a$, and $b$  indicated in \eqref{zpos}--\eqref{finalpositive}. 
\end{proof}

   The above Theorem may be regarded as providing a series acceleration,    since Bagis and Glasser's formula gives about 32 digits per term, compared to  
  the Chudnovsky brothers' algorithm giving about 14 digits per term.   

\section{Conclusion}\label{sectionConclusion}
 Our infinite family of Ramanujan-type series involving \eqref{anegconv} has led us to discover a new  Ramanujan-type series with a cubic convergence 
 rate that was not provided in \cite{BorweinBorwein1993}. This new series corresponds to the $r = 243$ 
 case of \eqref{anegconv}, noting that $243 = 3^{5}$, so that this non-squarefree case does not agree with the class number 
 three cases covered in \cite{BorweinBorwein1993}, and noting that we can use known recursions for Ramanujan's class invariants 
 \cite[p.\ 145]{BorweinBorwein1987} and for the elliptic alpha function \cite[p.\ 160]{BorweinBorwein1987} to prove the following result, given by a 
 Ramanujan-type series as in \eqref{eq:s-6-general} for the following values of $z$, $a$, and $b$. 
\begin{verbatim}
z = Root[6561 + 7046099782711303104000*#1 - 8241190499340288000000*#1^2 + 
4245232549888000000000*#1^3 & , 1, 0]
a = -Root[-529386137841972399112323 + 5193712075415612200704000*#1^2 + 
279610488520114176000000*#1^4 + 271694883192832000000000*#1^6 & , 1, 0]
b = -Root[-43766201502243114733034506827 + 
194827240993239940764096000*#1^2 - 1092165237528662016000000*#1^4 + 
4245232549888000000000*#1^6 & , 1, 0]
\end{verbatim}
 This gives about 18 digits per term. We encourage the systematic exploration of 
 Ramanujan-type series derived by analogy with our proof of Theorem \ref{maintheorem}, 
 to yield faster versions of the Chudnovsky algorithm. 
 
\subsection*{Acknowledgements}
 The author is grateful to acknowledge support from a Killam Postdoctoral Fellowship. The author is sincerely thankful to Shane Chern for many useful 
 discussions. The Reviewer feedback that has been provided has been very helpful and has led to considerable improvements of 
 this article. 

{\footnotesize

 \ 

Department of Mathematics and Statistics

Dalhousie University

{\tt jmaxwellcampbell@gmail.com}

}

\end{document}